\newtheorem{theorem}{Theorem}[section]
\newtheorem{proposition}[theorem]{Proposition}
\newtheorem{lemma}[theorem]{Lemma}
\newtheorem{corollary}[theorem]{Corollary}
\numberwithin{equation}{section}
\theoremstyle{definition}
\newtheorem{definition}[theorem]{Definition}
\newtheorem{remark}[theorem]{Remark}
\newtheorem{question}[theorem]{Question}
\newcommand{\CC}{\mathbb{C}}
\newcommand{\PP}{\mathbb{P}}
\newcommand{\QQ}{\mathbb{Q}}
\newcommand{\ZZ}{\mathbb{Z}}
\renewcommand{\to}{\xymatrix@1@=15pt{\ar[r]&}}
\renewcommand{\rightarrow}{\xymatrix@1@=15pt{\ar[r]&}}
\renewcommand{\mapsto}{\xymatrix@1@=15pt{\ar@{|->}[r]&}}
\renewcommand{\twoheadrightarrow}{\xymatrix@1@=15pt{\ar@{->>}[r]&}}
\renewcommand{\hookrightarrow}{\xymatrix@1@=15pt{\ar@{^(->}[r]&}}
\newcommand{\congpf}{\xymatrix@1@=15pt{\ar[r]^-\sim&}}
\begin{document}

\newboolean{xlabels} 
\newcommand{\xlabel}[1]{ 
                        \label{#1} 
                        \ifthenelse{\boolean{xlabels}} 
                                   {\marginpar[\hfill{\tiny #1}]{{\tiny #1}}} 
                                   {} 
                       } 
\setboolean{xlabels}{false} 

\title[Birationally isotrivial fiber spaces]{Birationally isotrivial fiber spaces}
\author[Bogomolov]{Fedor Bogomolov$^1$}
\address{Feder Bogomolov, Courant Institute of Mathematical Sciences\\
251 Mercer Street\\
New York, NY 10012, USA\\
and 
National Research University Higher School of Economics, Laboratory of Algebraic Geometry, HSE, 7 Vavilova Str., Moscow, Russian Federation, 117312}
\email{bogomolo@cims.nyu.edu}

\author[B\"ohning]{Christian B\"ohning$^2$}
\address{Christian B\"ohning, Fachbereich Mathematik der Universit\"at Hamburg\\
Bundesstra\ss e 55\\
20146 Hamburg, Germany}
\email{christian.boehning@math.uni-hamburg.de}

\author[Graf von Bothmer]{Hans-Christian Graf von Bothmer}
\address{Hans-Christian Graf von Bothmer, Fachbereich Mathematik der Universit\"at Hamburg\\
Bundesstra\ss e 55\\
20146 Hamburg, Germany}
\email{hcvbothmer@gmail.com}

\thanks{$^1$ Supported by NSF grant DMS- 1001662; the financial support from the Government of the Russian Federation within the
framework of the implementation of the 5-100 Programme Roadmap of the National
Research University  Higher School of Economics, AG Laboratory  is acknowledged.
}
\thanks{$^2$ Supported by Heisenberg-Stipendium BO 3699/1-1 and  BO 3699/1-2 of the DFG (German Research Foundation)}

\begin{abstract}
We prove that a family of varieties is birationally isotrivial if all the fibers are birational to each other.
\end{abstract}

\keywords{fiber spaces, birational automorphism groups, Cremona groups, rational varieties}
\subjclass{14E05, 14E07}

\maketitle

\section{Statement of the theorem}\xlabel{sStatement}

Let $k$ be an arbitrary algebraically closed field. We will have to assume that $k$ has infinite transcendence degree over the prime field, hence is uncountable. All varieties, morphisms and rational maps are defined over $k$. We want to prove the following theorem.

\begin{theorem}\xlabel{tMain}
Let $B$ be an irreducible algebraic variety and let 
\[
\xymatrix{
X\ar[d]_f \ar@{^{(}->}[r]^{\iota } &  B \times \mathbb{P}^t\ar[ld]^{\mathrm{pr}_1}\\
B & 
}
\]
be a projective flat family of irreducible varieties $X_b$, $b \in B$. Assume that all fibers $X_b$ ($b$ a closed point of $B$)  are birational to each other. Let $X_0$ be a projective model of the fibers, contained in some $\PP^s$. Then the family is birationally isotrivial, by which we mean that, equivalently, 
\begin{itemize}
\item
there exists a dense open subset $B^0\subset B$, a finite cover $B' \to B^0$ and a commutative diagram
\[
\xymatrix{
X \times_B B' =: X_{B'}\ar[rd]_{f_{B'}}\ar@{-->}[rr]^{\Phi} &  &  B' \times X_0 \ar[ld]^{\mathrm{pr}_1}\\
 & B' &
}
\]
with $\Phi$ birational; 
\item
or, in other words, denoting by $K=k (B)$ the function field of $B$, the \emph{geometric generic fiber} $X\times_B \overline{K}$ is birational, over $\overline{K}$, to $X_0 \times_{k} \overline{K}$. Here $\overline{K}$ is the separable closure.
\end{itemize}
\end{theorem}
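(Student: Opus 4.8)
The plan is to realize the birational maps between fibers as points of a relative Hilbert scheme and then to force one numerical stratum to be dominant by a countability argument. Concretely, set $Y := X \times_B (B \times X_0)$, so that the fiber of $Y \to B$ over a closed point $b$ is $X_b \times X_0$; the given embeddings $X \hookrightarrow B \times \PP^t$ and $X_0 \hookrightarrow \PP^s$ yield $Y \hookrightarrow B \times \PP^t \times \PP^s$, which I polarize by the Segre class $\mathcal{O}(1,1)$. Grothendieck's relative Hilbert scheme $\mathrm{Hilb}(Y/B)$ is then a disjoint union, over the (countable) set of occurring Hilbert polynomials $P$, of projective $B$-schemes $\mathrm{Hilb}^P(Y/B)$. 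Inside each I single out the locus $\mathcal{H}^P$ of points whose fiber $\Gamma \subset X_b \times X_0$ is integral with both projections $\Gamma \to X_b$ and $\Gamma \to X_0$ birational; by Chevalley's theorem and generic flatness this is a constructible subset, equipped with a morphism $\pi_P \colon \mathcal{H}^P \to B$.

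The hypothesis that every closed fiber $X_b$ is birational to $X_0$ says exactly that the closure of the graph of some birational map $X_b \dashrightarrow X_0$ is an integral subscheme of $X_b \times X_0$ with birational projections, that is, a closed point of some $\mathcal{H}^{P}$ lying over $b$. Hence $B = \bigcup_P \pi_P(\mathcal{H}^P)$, a \emph{countable} union of constructible subsets. This is precisely where the hypothesis on $k$ enters: an irreducible variety over an uncountable algebraically closed field is not the union of countably many proper closed subvarieties, so some $\pi_P(\mathcal{H}^P)$ is not contained in any proper closed subset; being constructible, it then contains a dense open subset of $B$, i.e.\ $\pi_P$ is dominant for this particular $P$.

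Fix such a $P$, choose an irreducible component $\mathcal{H}_0 \subset \mathcal{H}^P$ dominating $B$, and cut it with general hyperplane sections (for the projective embedding) to obtain an irreducible $Z \subset \mathcal{H}_0$ with $Z \to B$ dominant and generically finite. Then $L := k(Z)$ is a finite extension of $K := k(B)$, and the tautological subscheme over the generic point of $Z$ is a subscheme $\Gamma_L \subset X_L \times_L (X_0)_L$, where $X_L := X \times_B \mathrm{Spec}\, L$. Since the property of being the graph of a birational map holds on a dense set of closed points of $\mathcal{H}_0$ and is constructible, it holds at the generic point of $\mathcal{H}_0$, hence at that of $Z$, so $\Gamma_L$ is the graph of a birational map $X_L \dashrightarrow (X_0)_L$ over $L$. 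As $L \hookrightarrow \overline{K}$, base change produces a birational map $X \times_B \overline{K} \dashrightarrow X_0 \times_k \overline{K}$ over $\overline{K}$, which is the second formulation; spreading this map out and normalizing $B$ in $L$ over a suitable dense open $B^0 \subset B$ yields the finite cover $B' \to B^0$ together with the birational $\Phi \colon X_{B'} \dashrightarrow B' \times X_0$ of the first formulation. The equivalence of the two formulations is the standard dictionary between relative birational maps and isomorphisms of function fields.

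The step I expect to be the main obstacle is the second one, namely organizing the birational maps into an honest algebraic family: the substantive points are that ``graph of a birational map'' cuts out a constructible locus in the relative Hilbert scheme (so that Chevalley applies and, crucially, so that the property descends to the generic point of the dominant component), and that the resulting countable union of constructible sets exhausts $B$, at which point the uncountability of $k$ does the rest. By contrast, the passage from a dominant stratum to a finite base change and the translation between the two formulations are routine.
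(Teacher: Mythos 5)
Your argument is correct in outline and reaches the theorem by a genuinely different route from either of the paper's proofs, though it shares the skeleton of the first one: countably many algebraic families of birational maps, a pigeonhole over the uncountable field to find a dominant piece, then a generically finite multisection. The paper's first proof parametrizes maps by tuples of degree-$d$ polynomials on the fixed model $X_0$ and must then work to make the relevant loci algebraic: it needs Mumford's boundedness theorem (Theorem \ref{tMumford}) to make the ``inverse of degree $d'$'' condition closed stratum by stratum (Proposition \ref{pBirFamilies}), and the referee's Lemma \ref{lReferee} to control fiberwise closures of images of rational maps. You instead parametrize maps by their graphs in the relative Hilbert scheme of $X \times_k X_0$ over $B$, indexed by the countably many Hilbert polynomials for the Segre polarization; since Hilbert-scheme points are already closed subschemes, both the fiberwise-closure problem and the need for a uniform degree bound on inverses evaporate, which is a real simplification. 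What you pay is exactly the point you flag: constructibility of the locus where the universal subscheme is fiberwise integral with both projections birational. Your appeal to ``Chevalley plus generic flatness'' is too thin as stated --- Chevalley gives constructibility of images, not of fiberwise birationality --- but the claim is true by the standard spreading-out/noetherian-induction argument: integrality of fibers is an open condition because the universal family is flat and proper, and on each irreducible component of the Hilbert scheme the projections are birational at the generic point if and only if they are birational over a dense open subset of that component. This spreading-out step is the precise analogue, in your setting, of the role Lemma \ref{lReferee} plays in the paper, and it is also what legitimizes your later inference that the graph property, holding on a dense set of closed points, holds at the generic point of $Z$ (provided the general hyperplane cuts are chosen so that $Z$ meets the dense open where the property holds, which they are). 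One caveat at the paper's own level of rigor: in characteristic $p$ your generically finite $Z \to B$ could a priori yield an inseparable extension $L/K$, which would not embed in the separable closure $\overline{K}$ demanded by the theorem's second bullet; choosing the cuts so that the generic-fiber section is geometrically reduced (Bertini over the infinite field $K$) repairs this, and the paper itself elides the same point. Note finally that your route has nothing in common with the paper's much shorter second proof, which avoids all parameter spaces by taking a van der Waerden $k_0$-generic closed point $\xi \in B$ and transporting the birational map $X_\xi \dashrightarrow X_0$ directly to the geometric generic fiber.
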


\begin{remark}\xlabel{rThmMain}
Note that Theorem \ref{tMain} does not claim that for any given $b \in B$ we can find a $\Phi$ which is defined in a neighborhood of the generic point of the fiber $X_b$. It just says that this is true for a general point of $B$. 
\end{remark}

The projectivity of the family in Theorem \ref{tMain} is not essential:

\begin{corollary}\xlabel{cMain}
Let $g: U \to B$ be a family of algebraic varieties such that all the fibers are birational to each other and $B$ is integral. Then $g$ is birationally isotrivial.
\end{corollary}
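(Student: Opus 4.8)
The plan is to reduce the general statement to the projective flat case already settled in Theorem \ref{tMain} by a standard spreading-out argument, exploiting that birational isotriviality is, by the second characterization in Theorem \ref{tMain}, a property of the geometric generic fiber alone; in particular it is insensitive to shrinking $B$ to a dense open subset and to replacing $U$ by any birational model over $B$.

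First I would pass to the generic fiber. Set $K = k(B)$ and let $U_\eta$ be the fiber of $g$ over the generic point $\eta = \operatorname{Spec} K$; it is a variety over $K$. Choose a projective model, i.e. a projective $K$-variety $\overline{U_\eta} \subset \PP^t_K$ together with a birational map $U_\eta \dashrightarrow \overline{U_\eta}$ over $K$ (any variety is birational to a projective one). The finitely many coefficients occurring in the equations of $\overline{U_\eta}$ and in this birational map lie in $K$, hence are regular on some dense open $B^0 \subset B$. Spreading out over $B^0$ produces a closed subscheme $X \subset B^0 \times \PP^t$, projective over $B^0$, with generic fiber $\overline{U_\eta}$, together with a birational map $\Psi\colon U \times_B B^0 \dashrightarrow X$ over $B^0$. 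After shrinking $B^0$ further, generic flatness and generic geometric irreducibility make $X \longrightarrow B^0$ flat with irreducible fibers, so that $X \subset B^0 \times \PP^t$ is exactly a family of the type occurring in Theorem \ref{tMain}.

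Next I would check that $X \longrightarrow B^0$ satisfies the hypotheses of Theorem \ref{tMain} after one more shrinking. The birational map $\Psi$ is an isomorphism between dense open subschemes $W \subset U \times_B B^0$ and $X' \subset X$. Upper semicontinuity of fiber dimension applied to the complements $(U\times_B B^0)\setminus W$ and $X \setminus X'$ shows that there is a dense open $B^1 \subset B^0$ such that for every closed point $b \in B^1$ the restriction $\Psi_b\colon U_b \dashrightarrow X_b$ is again birational. Since by hypothesis all the fibers $U_b$ are birational to one fixed projective model $X_0$, it follows that all the fibers $X_b$, $b \in B^1$ a closed point, are birational to $X_0$, hence to each other. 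Theorem \ref{tMain} now applies to the family $X \longrightarrow B^1$ and yields that it is birationally isotrivial; concretely, $X \times_{B^1} \overline{K}$ is birational over $\overline{K}$ to $X_0 \times_k \overline{K}$.

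Finally I would transport the conclusion back to $g$. Since $\Psi$ is birational over $B^0$, the geometric generic fiber $U \times_B \overline{K}$ is birational over $\overline{K}$ to $X \times_{B^1} \overline{K}$, and combining this with the previous step gives that $U \times_B \overline{K}$ is birational over $\overline{K}$ to $X_0 \times_k \overline{K}$. By the second characterization in Theorem \ref{tMain} this is precisely the assertion that $g$ is birationally isotrivial. The only genuinely delicate point is the semicontinuity/constructibility argument guaranteeing that, after shrinking, the generic birational equivalence restricts to a fiberwise birational equivalence over \emph{every} closed point of $B^1$; this is what lets us meet the ``all fibers birational'' hypothesis of Theorem \ref{tMain} over the new base rather than merely over a general point.
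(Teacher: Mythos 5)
Your proposal is correct, and it reaches the same reduction target as the paper --- replace $g$ by a projective flat family, birational to it over (a dense open of) $B$, and invoke Theorem \ref{tMain} --- but by a different mechanism. The paper's proof is a three-line global argument: compactify ($g$ may be assumed proper), apply Chow's lemma to dominate $g$ by a projective morphism $g'\colon U'\to B$ whose total space is birational to $U$ over $B$, and use generic flatness over the integral base before citing Theorem \ref{tMain}. You instead work from the generic point: take a projective model of the generic fiber $U_\eta$ over $K=k(B)$, spread it and the birational map out over a dense open $B^0$, and shrink to achieve flatness and irreducible fibers. The two routes buy different things. The paper's is shorter and avoids any discussion of fibers, but it is terse: to apply Theorem \ref{tMain} to $g'$ one still needs the closed fibers of $g'$ to be irreducible and birational to each other, which holds only over a dense open of $B$ and requires precisely the constructibility/semicontinuity argument you spell out (the complements of the open loci where $\Psi$ is an isomorphism have fiber dimension $\le$ the generic value on a dense open, hence the fiberwise restrictions $\Psi_b$ remain birational there). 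So your version makes explicit a step the paper suppresses, and it also records clearly why shrinking $B$ is harmless --- birational isotriviality, via the second characterization in Theorem \ref{tMain}, depends only on the geometric generic fiber $U\times_B \overline{K}$. The only points you should flag as standard bookkeeping are that one first discards components of $U$ not dominating $B$ (so that $U\times_B B^0$ is irreducible and your density arguments for $W$ and $X'$ apply) and that the generic fiber can be taken geometrically integral after shrinking, since the closed fibers are assumed to be varieties; with these remarks your argument is complete and is a legitimate, slightly more detailed alternative to the Chow's-lemma reduction.
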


\begin{proof}
One can assume $g$ proper, and then use Chow's lemma: $g$ is dominated by a projective morphism $g' : U' \to B$ of $B$-schemes, and the total spaces are isomorphic on open dense subsets. Since $B$ is integral, $g'$ is generically flat and we can use Theorem \ref{tMain}.
\end{proof}

Theorem \ref{tMain} may be seen as the birational analogue of the local triviality theorem of Fischer and Grauert \cite{FG65}, or, in the algebraic case, the statement that a family of projective schemes is locally isotrivial if all fibers are isomorphic (see \cite{Sernesi}, Prop. 2.6.10). Also, it points to another important aspect of the classification of finite subgroups of the Cremona groups as birational monodromy groups of fibrations with rational fibres. 
It is possible that Theorem \ref{tMain} is well known to some experts, but the ones we consulted could not tell us the proof nor say if it was a valid statement, and we could not find a proof in the literature.

We will give two proofs of Theorem \ref{tMain}: the first in Section \ref{sFirstProof} gives additional insight into the structure of certain parameter spaces of rational maps. Moreover, working over $\CC$, it also yields the result under the weaker hypothesis that $B$ is just an analytic space provided one imposes certain extra conditions on the family, e.g. if the loci in $B$ over which $X_b$ remains constant as a subvariety of $\PP^t$, are equidimensional; see Remark \ref{rGeneralization}. 


The second proof, in Section \ref{sSecondProof}, works only for an algebraic base. 

Though the second proof is much shorter, it is less illuminating, less geometric and does not develop structural results about spaces of rational maps as the first one does, which are very useful elsewhere. So we felt that it would be good to include both. We end with some remarks and open problems.

We would like to thank the very helpful referee who suggested a way of removing the dependency on the Eisenbud-Goto conjecture of our first proof of Theorem \ref{tMain}. Lemma \ref{lReferee} and its application in the proof of Proposition \ref{pFamilies} is due to her or him.

\section{First proof}\xlabel{sFirstProof}

We start by proving that certain parameter spaces for birational maps carry natural structures of algebraic varieties. Note that there are some subtleties here.

\begin{remark}\xlabel{rSubtleties}
We recall several facts from \cite{B-F13}. We can introduce the Zariski topology in the set $\mathrm{Bir}_{\le d}(\PP^n )$ of birational self-maps of $\PP^n$ of degree $\le d$ as follows: let 
\[
H_d \subset \PP \left( \left( \mathrm{Sym}^{d} (k^{n+1})^{\vee} \right)^{n+1} \right)
\]
be the locally closed subset of tuples $(h_0, \dots , h_n )$ of homogeneous polynomials $h_i$ of degree $d$ on $\PP^n$ that give a birational self-map. Then there exists a natural surjection $H_d \twoheadrightarrow \mathrm{Bir}_{\le d}(\PP^n )$, and the quotient topology of the Zariski topology on $H_d$ is called the Zariski topology on $\mathrm{Bir}_{\le d}(\PP^n )$. This has good functorial properties, e.g. a set $F \subset \mathrm{Bir}_{\le d}(\PP^n )$ is closed if and only if for any family of birational self-maps of $\PP^n$ parametrized by a variety $A$, the preimage of $F$ under the induced map $A \to \mathrm{Bir}_{\le d}(\PP^n )$ is closed. But, due to the funny ways birational maps can degenerate to ones of lower degree, $\mathrm{Bir}_{\le d}(\PP^n )$ cannot even be homeomorphic to an algebraic variety; more precisely, every point $p\in \mathrm{Bir}_{\le d}(\PP^n )$ is an \emph{attractive point} for a suitable closed irreducible subset $T \ni p$, by which we mean that $p$ is contained in every infinite closed subset $F \subset T$, and $T$ is such that it contains infinite proper closed subsets. 

The following example of \S 3 of \cite{B-F13} is useful to keep in mind (it is the reason for all these phenomena): define a $2$-parameter family in $\mathrm{Bir}_{\le 2}(\PP^n)$ by the formula
\[
\left( X_0 (aX_2 + cX_0) : X_1 (aX_2 + b X_0) : X_2 (aX_2 + cX_0) : \dots : X_n (aX_2 + cX_0) \right)
\]
or, in affine coordinates $x_i = X_i/X_0$
\[
(x_1, \dots , x_n) \mapsto \left( x_1 \cdot \frac{ax_2 + b}{ax_2 + c}, x_2, \dots , x_n \right) 
\]
where $(a:b:c) \in \PP^2 \backslash \{ (0:1:0), (0:0:1) \} =: \hat{V}$. The image $V$ of $\hat{V}$ in $\mathrm{Bir}_{\le 2}(\PP^n)$ is closed, but the line $L\subset \hat{V}$ given by $b=c$ is contracted to the identity. The topology on $V$ is the quotient topology of the Zariski topology on $\hat{V}$, so the identity is attractive for $V$.
\end{remark}

\begin{definition}\xlabel{dSpacesBirMaps}
Let $S \subset \PP^s = \PP (k^{s+1})$ be an irreducible projective variety (the ``source"). Let $x_0, \dots , x_s$ be homogeneous coordinates in $\PP^s$. We will always assume that $S$ is not contained in any coordinate hyperplane. Let $\PP^t$ be another projective space (the ``target") with homogeneous coordinates $y_0, \dots , y_t$.
\begin{itemize}
\item[(1)]
We write
\[
\mathcal{P}_d (\underline{x}) = \PP \left( \left( \mathrm{Sym}^d (k^{s+1})^{\vee} \right)^{t+1} \right)
\] 
for the projective space of all $(t+1)$-tuples $\underline{p} = (p_0, \dots , p_t)$ of homogeneous polynomials $p_i$ of degree $d$ in the $x_0, \dots , x_s$. Similarly, we denote by
\[
\mathcal{P}_{d'} (\underline{y}) = \PP \left( \left( \mathrm{Sym}^{d'} (k^{t+1})^{\vee} \right)^{s+1} \right)
\]
the set of $(s+1)$-tuples $\underline{q} = (q_0, \dots , q_s)$ of homogeneous polynomials $q_j$ of degree $d'$ in the $y_0, \dots , y_t$.
\item[(2)]
Define the subset
\[
\mathrm{Rat}_d (S, \PP^t) \subset \mathcal{P}_d (\underline{x})
\]
to be the set of those $\underline{p}$ which have the following properties:
\begin{itemize}
\item[(a)]
not all the $p_i$ are simultaneously contained in the homogeneous ideal $I(S) \subset k [x_0, \dots , x_s]$.
\item[(b)]
the rational map (well-defined by (a))
\begin{align*}
\varphi_{\underline{p}} :& S \dasharrow \PP^t\\
   & (x_0: \dots : x_s ) \mapsto (p_0 (x_0, \dots , x_s) : \dots : p_t (x_0, \dots , x_s))
\end{align*}
maps $S$ dominantly to an image $T:= \overline{\varphi_{\underline{p}} (S)} \subset \PP^t$ of dimension $\dim T = \dim S$. 
\end{itemize}
\item[(3)]
Fix moreover a positive integer $d'$. We denote by 
\[
\mathrm{Bir}_{d,d'} (S, \PP^t ) \subset \mathrm{Rat}_d (S, \PP^t) \subset \mathcal{P}_d (\underline{x})
\]
the subset of those $\underline{p}$ which have the following additional property:
\begin{itemize}
\item[(c)]
there exists $\underline{q} \in \mathcal{P}_{d'} (\underline{y})$ such that 
\begin{align*}
\varphi_{\underline{q}} :& T \dasharrow \PP^s\\
   & (y_0: \dots : y_t ) \mapsto (q_0 (y_0, \dots , y_t) : \dots : q_s (y_0, \dots , y_t))
\end{align*}
is a well-defined rational map on $T$ and inverse to $\varphi_{\underline{p}}$. 
\end{itemize}
In other words, $\varphi_{\underline{p}}$ is birational with inverse given by degree $d'$ polynomials.
Let $\mathrm{Bir}_{d} (S, \PP^t )$ be the union, over $d'$, of all $\mathrm{Bir}_{d,d'} (S, \PP^t )$. 
\end{itemize}
\end{definition} 

The following Lemma, its proof and its application in the proof of Proposition \ref{pFamilies} were kindly suggested by the referee. Our initial proof was conditional on the Eisenbud-Goto conjecture.

\begin{lemma}\xlabel{lReferee}
Let $p\colon Z \to \Omega$ be a morphism of varieties over an algebraically closed field $k$ 
and let $W \subset Z$ be a constructible subset. Then there is a finite collection of locally closed subsets $\Omega_i \subset \Omega$, $1\le i\le N$, and closed subsets $Y_i \subset p^{-1}(\Omega_i)$ such that $\Omega= \bigcup_{i=1}^N \Omega_i$ and for any $i$, $1\le i\le N$, and any point $\omega \in \Omega_i$, the fiber $(Y_i)_{\omega}$ is equal to the closure of $W_{\omega}$ in $Z_{\omega}$. 
\end{lemma}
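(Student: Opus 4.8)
The plan is to argue by Noetherian induction on closed subsets of $\Omega$, concentrating the real content in a single ``generic'' stratum and reducing the general constructible $W$ to the case of an open dense subset of a reduced closed subscheme, where the formation of the fibrewise closure can be controlled by generic flatness. I would first record three elementary facts used throughout. A constructible set $W$ contains an open subset $W^\circ$ that is dense in its closure $\overline W$ (taken in $Z$). Since $W\subseteq p^{-1}(\overline{p(W)})$ and the latter is closed, one has $\overline W\subseteq p^{-1}(\overline{p(W)})$, so $(\overline W)_\omega=\emptyset$ whenever $\omega\notin\overline{p(W)}$. Finally, for \emph{every} $\omega$ the inclusion $\overline{W_\omega}\subseteq(\overline W)_\omega$ is automatic, because $(\overline W)_\omega$ is closed in $Z_\omega$ and contains $W_\omega$; the whole difficulty is to produce, on each stratum, the \emph{reverse} inclusion.

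For the induction it suffices to treat an \emph{irreducible} $\Omega$, producing one nonempty open $\Omega_1\subseteq\Omega$ together with a closed $Y_1\subseteq p^{-1}(\Omega_1)$ satisfying $(Y_1)_\omega=\overline{W_\omega}$ for all $\omega\in\Omega_1$; one then restricts the whole situation to the proper closed subset $\Omega\setminus\Omega_1$ and invokes the inductive hypothesis. The reducible case is reduced to the irreducible one by first passing to the open stratum $\Sigma_0\setminus\bigcup_{i\neq 0}\Sigma_i$ cut out by a single component $\Sigma_0$, which is open in $\Omega$ and irreducible. With $\Omega$ irreducible there are two cases. If $p(W)$ is not dense, then $\Omega_1:=\Omega\setminus\overline{p(W)}$ is open dense and, by the second fact above, both $\overline{W_\omega}$ and $(\overline W)_\omega$ are empty there, so $Y_1:=\emptyset$ works.

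If $p(W)$ is dense, I would replace $Z$ by the reduced closed subscheme $\overline W$ and $W$ by an open dense subset $W^\circ$ of it. This is harmless, since $\overline{W^\circ_\omega}\subseteq\overline{W_\omega}\subseteq(\overline W)_\omega=(\overline{W^\circ})_\omega$, so equality at the two ends forces equality throughout, and it is enough to treat $W^\circ$. Now $W^\circ$ is open dense in the reduced variety $Z=\overline W$, and the goal becomes to find a dense open $\Omega_1$ over which $W^\circ_\omega$ is dense in $Z_\omega$; setting $Y_1:=\overline W\cap p^{-1}(\Omega_1)$, which is closed in $p^{-1}(\Omega_1)$, then gives $(Y_1)_\omega=Z_\omega=\overline{W^\circ_\omega}=\overline{W_\omega}$ for $\omega\in\Omega_1$.

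The heart of the matter, and the step I expect to be the main obstacle, is this generic density statement, whose failure on special fibres is exactly the jumping phenomenon $(\overline W)_\omega\supsetneq\overline{W_\omega}$ one must rule out generically. Writing $D:=Z\setminus W^\circ$, which is closed and contains no generic point of any irreducible component of $Z$, I must arrange that $D_\omega$ is nowhere dense in $Z_\omega$ for generic $\omega$, i.e.\ that no irreducible component of $Z_\omega$ is swallowed by $D_\omega$. The tool is generic flatness: applied to each irreducible component $C_j$ of $Z$ and to each irreducible component $D_l$ of $D$, it yields a dense open $\Omega_1\subseteq\Omega$ over which every nonempty fibre $(C_j)_\omega$ is equidimensional of dimension $\dim C_j-\dim\Omega$ and every $(D_l)_\omega$ is either empty or equidimensional of dimension $\dim D_l-\dim\Omega$; components of $Z$ or of $D$ not dominating $\Omega$ are discarded by shrinking $\Omega_1$ away from the proper closed sets $\overline{p(C_j)}$, $\overline{p(D_l)}$. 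Since $D_l\subsetneq C_{j(l)}$ for the component $C_{j(l)}\supseteq D_l$, one has $\dim D_l<\dim C_{j(l)}$, hence $\dim(D_l)_\omega<\dim(C_{j(l)})_\omega$. A short dimension count, using that an irreducible set contained in a finite union of closed sets lies in one of them, then shows that no component of $Z_\omega$ can be contained in $D_\omega$. This gives the required generic density, completes the stratum, and with it the Noetherian induction.
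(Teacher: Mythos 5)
Your proposal is correct, but it replaces the paper's key mechanism with a different one. The paper makes the same reduction you do (closure commutes with finite unions; Noetherian induction; it suffices to find one dense open $U\subseteq\Omega$ over which the fibers of an open dense $W\subseteq Z$ are fiberwise dense), but it first arranges $Z$ irreducible and then argues according to the structure of $p$: if the geometric generic fiber is irreducible, generic irreducibility and equidimensionality of $Z_\omega$ and $W_\omega$ settle it; if $p$ is finite, the complement $Z\setminus W$ cannot dominate $\Omega$; and the general case is reduced to these two by factoring $p$, after shrinking $\Omega$, as $Z\to\Omega'\to\Omega$ with $\Omega'$ the finite cover given by the separable closure of $k(\Omega)$ in $k(Z)$ --- a Stein-type factorization that in effect identifies the components of the fibers $Z_\omega$ with the points of $\Omega'_\omega$. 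You avoid this factorization entirely: you keep $Z=\overline{W}$ possibly reducible and rule out the swallowing of fiber components by $D=Z\setminus W^\circ$ via generic flatness and a dimension count. That is legitimate and arguably more elementary (no function-field construction, only generic flatness and the fiber-dimension theorem), but the one place you compress is exactly where the reducibility of $Z$ bites: a priori a component $E$ of $Z_\omega$ arising from a low-dimensional component $C_j$ could be contained in $(D_l)_\omega$ for a $D_l$ sitting inside a \emph{different, larger} component $C_{j(l)}$, and then $\dim(D_l)_\omega$ may well exceed $\dim E$, so the naive comparison $\dim(D_l)_\omega<\dim(C_{j(l)})_\omega$ does not immediately exclude $E\subseteq (D_l)_\omega$. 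The count still closes, and you should say how: from $E\subseteq(D_l)_\omega\subseteq(C_{j(l)})_\omega$ and the equidimensionality of $(C_{j(l)})_\omega$ over $\Omega_1$, the set $E$ lies in an irreducible component $E'$ of $(C_{j(l)})_\omega$ with $\dim E'=\dim C_{j(l)}-\dim\Omega>\dim D_l-\dim\Omega\ge\dim E$; since $E'$ is an irreducible closed subset of $Z_\omega$ strictly containing $E$, this contradicts the maximality of $E$ among irreducible closed subsets of $Z_\omega$. With that sentence supplied, your stratum argument is complete, and the remaining scaffolding (reduction of a constructible $W$ to a dense open subset of $\overline{W}$, the case $p(W)$ not dense, the Noetherian induction) matches the paper's skeleton; what the paper's route buys instead is the structural description of fiber components via the finite cover $\Omega'$, while yours stays within elementary fiber-dimension theory.
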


\begin{proof} Using the fact that taking the closure of sets commutes with finite unions, one reduces the Lemma to the case when $Z$ and $\Omega$ are irreducible, $p$ is dominant, and $W$ is an open dense subset in $Z$. Moreover, by Noetherian induction, it is enough to show that there is a non-empty open subset $U \subset \Omega$ such that for any point $\omega \in U$, the fiber $W_{\omega}$ is dense in $Z_{\omega}$.

First assume that the geometric generic fiber of $p$ is irreducible. Then there is a non-empty open subset $U \subset \Omega$ such that for any point $\omega \in U$, both fibers $Z_{\omega}$ and $W_{\omega}$ are irreducible and have the same dimension $\dim(Z) - \dim(\Omega )$. Hence $W_{\omega}$ is dense in $Z_{\omega}$ (actually, this is the only case that appears in the application below).

Secondly assume that $p$ is finite (and surjective). Then the closed complement $C := Z \backslash W$ does not dominate $\Omega$, otherwise $Z$ would not be irreducible. Therefore we can put $U := \Omega \backslash p(C)$. 

Finally, for an arbitrary $p$ as above, changing $\Omega$ by a non-empty open subset, we can decompose $p$ into a composition $Z \xrightarrow{q} \Omega' \xrightarrow{r} \Omega$, where $q$ has an irreducible geometric generic fiber and $r$ is finite. With this aim, consider a separable closure $K$ of $k(\Omega)$ in $k(Z)$. Since $K$ is finitely generated over $k(\Omega)$, changing $\Omega$ by a non-empty open subset, we can achieve that $K$ is generated by functions that are regular on $Z$. The sheaf of $\mathcal{O}_{\Omega}$-algebras generated by them gives us $\Omega'$.
\end{proof}

\begin{proposition}\xlabel{pFamilies}
There are countably many locally closed subvarieties \[ \mathrm{Rat}_{d,i}(S, \PP^t) \subset \mathcal{P}_d(\underline{x}), i\in I,\] 
and flat projective families 
\[
\mathcal{T}_{d,i}(S, \PP^t)\subset \PP^t \times  \mathrm{Rat}_{d,i}(S, \PP^t) \to \mathrm{Rat}_{d,i}(S, \PP^t)
\]
with
\[
\mathrm{Rat}_d(S, \PP^t) = \bigcup_{i\in I} \mathrm{Rat}_{d,i}(S, \PP^t)
\]
such that the closure of the fiber $(\mathcal{T}_{d,i}(S, \PP^t))_{\underline{p}}$ over $\underline{p} \in \mathrm{Rat}_d(S, \PP^t)$ is equal to the image $T$ of $\varphi_{\underline{p}}$.
\end{proposition}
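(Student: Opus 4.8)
The plan is to realise the image $T=\overline{\varphi_{\underline p}(S)}$ as the fibre of one family living over all of $\mathcal{P}_d(\underline{x})$, and then to use Lemma \ref{lReferee} to control how these fibres, and especially their closures, vary. Set $\Omega := \mathcal{P}_d(\underline{x})$, let $Z := \PP^t \times \Omega$, and let $p := \mathrm{pr}_2 \colon Z \to \Omega$ be the projection. The first task is to produce a constructible subset $W \subset Z$ whose fibre $W_{\underline p}$ over a point $\underline p \in \mathrm{Rat}_d(S,\PP^t)$ equals the honest image of $\varphi_{\underline p}$ over its domain of definition, so that $\overline{W_{\underline p}} = T$.

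To construct $W$ I would first form the universal graph. Inside $S \times \PP^t \times \Omega$ consider the subset $\Gamma^0$ cut out by the bihomogeneous equations $y_i\, p_j - y_j\, p_i = 0$ for all $i,j$, together with the open condition that not all $p_i$ vanish at the given point of $S$. Then $\Gamma^0$ is locally closed, and its fibre over $\underline p \in \mathrm{Rat}_d(S,\PP^t)$ is precisely the graph of $\varphi_{\underline p}$ over the open locus where it is defined. Let $\pi \colon S \times \PP^t \times \Omega \to Z$ forget the $S$-factor and set $W := \pi(\Gamma^0)$; by Chevalley's theorem $W$ is constructible. Since $\pi$ lies over $\Omega$, forming images commutes set-theoretically with passing to fibres over $\Omega$, so $W_{\underline p} = \pi_{\underline p}\bigl((\Gamma^0)_{\underline p}\bigr)$ is the image of $\varphi_{\underline p}$ over its domain; as $S$ is irreducible and projective this is dense in the irreducible closed set $T$, whence $\overline{W_{\underline p}} = T$.

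Now apply Lemma \ref{lReferee} to $p \colon Z \to \Omega$ and the constructible $W$. It yields finitely many locally closed $\Omega_j \subset \Omega$ covering $\Omega$ and closed subsets $Y_j \subset p^{-1}(\Omega_j) = \PP^t \times \Omega_j$ with $(Y_j)_{\underline p} = \overline{W_{\underline p}}$ for every $\underline p \in \Omega_j$; in particular $(Y_j)_{\underline p} = T$ whenever $\underline p \in \Omega_j \cap \mathrm{Rat}_d(S,\PP^t)$. It remains to make the families $Y_j \to \Omega_j$ flat and projective: for this I would apply the flattening stratification (generic flatness and Noetherian induction) to each projective morphism $Y_j \to \Omega_j$, refining $\Omega_j$ into finitely many locally closed pieces over which $Y_j$ becomes flat. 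Intersecting these pieces with $\mathrm{Rat}_d(S,\PP^t)$ and decomposing the resulting constructible sets into locally closed subvarieties produces the desired $\mathrm{Rat}_{d,i}(S,\PP^t)$, with $\mathcal{T}_{d,i}(S,\PP^t)$ the corresponding restriction of $Y_j$ inside $\PP^t \times \mathrm{Rat}_{d,i}(S,\PP^t)$. These are finitely many, a fortiori the countably many required, they cover $\mathrm{Rat}_d(S,\PP^t)$, and the fibre over $\underline p$ is the closed set $T$, whose closure is again $T$.

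The crux of the argument, and the point where Lemma \ref{lReferee} is indispensable, is that forming the closure does \emph{not} commute with specialisation: the fibre over $\underline p$ of the closure $\overline{W}$ may be strictly larger than $\overline{W_{\underline p}} = T$, precisely because of the degeneration phenomena recorded in Remark \ref{rSubtleties}, where maps drop degree and the image can jump. Working with the constructible set $W$ and computing its closure fibrewise, uniformly in the family, is exactly what the Lemma provides and circumvents this difficulty; our earlier approach had instead to bound the relevant closures through a regularity estimate and was therefore conditional on the Eisenbud--Goto conjecture. Once the fibres are pinned down set-theoretically, the remaining passage to flat projective families is routine.
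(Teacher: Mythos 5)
Your argument is correct and in substance it is the paper's own proof: both hinge on applying Lemma \ref{lReferee} to $Z=\PP^t\times\Omega$ with $W$ the image of the morphism representing the universal rational map on its domain of definition, followed by a flattening stratification. The one organizational difference is where the restriction to $\mathrm{Rat}_d(S,\PP^t)$ happens: the paper first replaces $\mathcal{P}_d(\underline{x})$ by $\Omega=\mathrm{Rat}_d(S,\PP^t)$ itself, observing that it is \emph{open} --- condition (a) of Definition \ref{dSpacesBirMaps} visibly so, and condition (b) because $\dim T=\dim S$ can be rephrased as the Jacobian matrix of the $p_i$ having maximal rank generically on $S$ --- whereas you run the Lemma over all of $\mathcal{P}_d(\underline{x})$ and only intersect with $\mathrm{Rat}_d(S,\PP^t)$ at the very end. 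That last step tacitly assumes $\mathrm{Rat}_d(S,\PP^t)$ is constructible, which you never verify; without it, ``decomposing the resulting constructible sets into locally closed subvarieties'' into \emph{finitely} many pieces per stratum is unjustified. The omission is easily repaired, either by the paper's Jacobian observation or from within your own construction: on each stratum $\Omega_j$ the fiber dimension of the projective family $Y_j\to\Omega_j$ is upper semicontinuous on the base, and for $\underline{p}$ satisfying (a) you have shown $(Y_j)_{\underline{p}}=\overline{W_{\underline{p}}}=T$, so condition (b) becomes the locally closed condition $\dim (Y_j)_{\underline{p}}=\dim S$; intersecting with the open locus (a) exhibits $\mathrm{Rat}_d(S,\PP^t)$ as constructible. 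With that one remark added, your proof and the paper's coincide; your explicit construction of $W$ via the trihomogeneous equations $y_ip_j-y_jp_i=0$ simply spells out what the paper calls ``the image of the natural morphism representing the rational $S\times\Omega\dasharrow\PP^t\times\Omega$ on its domain of definition,'' and your closing discussion of why closures do not commute with specialisation matches the paper's motivation for Lemma \ref{lReferee} (and for the earlier reliance on Eisenbud--Goto) exactly.
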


\begin{proof}
The condition that not all of the $p_i$ in $\underline{p}$ are contained in $I(S)$ defines an open subset $\Omega_1 \subset \mathcal{P}_d (\underline{x})$. Moreover, we can ensure a priori that the dimension of the image of $\varphi_{\underline{p}}$ is equal to $\dim S$ by requiring the (equivalent to $\dim S = \dim T$) condition that the Jacobian matrix of the $p_i$'s is of maximal rank generically on $S$, so that $S \dasharrow T$ is generically a covering map. This defines another open set $\Omega \subset \Omega_1$. 

Now apply Lemma \ref{lReferee} to this $\Omega$, $Z = \PP^t \times \Omega$, and $W$ equal to the image of the natural morphism representing the rational $S\times \Omega \dasharrow \PP^t \times \Omega$ on its domain of definition. Applying the Lemma and taking a flattening stratification of all arising morphisms to locally closed subsets of $\Omega$, we obtain locally closed subsets $\mathrm{Rat}_{d,i}(S, \PP^t)$ and families $\mathcal{T}_{d,i}(S, \PP^t) \to \mathrm{Rat}_{d,i}(S, \PP^t)$ as required. 
\end{proof}

We recall the following result of Mumford \cite{Mum66}, Chapter 14.

\begin{theorem}\xlabel{tMumford}
Fix $h = h(n) \in \mathbb{Q}[n]$ with $h (\ZZ ) \subset \ZZ$. Then there exists a uniform $d_h$ such that for each variety $T \subset \PP^t$ with Hilbert polynomial $h$, the Hilbert function of $T$ agrees with the Hilbert polynomial of $T$ at places  $d \ge d_h$ and $I(T)$ is generated by polynomials of degree $\le d_h$. 
\end{theorem}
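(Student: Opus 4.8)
The plan is to deduce Theorem~\ref{tMumford} from a single uniform statement about \emph{Castelnuovo--Mumford regularity}: there is a number $d_h$, depending only on the polynomial $h$, such that the ideal sheaf $\mathcal{I}_T \subset \sO_{\PP^t}$ is $d_h$-regular for every closed subscheme $T \subset \PP^t$ with Hilbert polynomial $h$. (I would prove the statement for all such subschemes, not only irreducible varieties, since the hyperplane-section induction below forces this: a general hyperplane section of an irreducible curve is a reduced but reducible $0$-dimensional scheme.) Recall that a coherent sheaf $\mathcal{F}$ on $\PP^t$ is \emph{$m$-regular} if $H^i(\PP^t, \mathcal{F}(m-i)) = 0$ for all $i \ge 1$. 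Once the bound $d_h$ is produced, the rest is formal, so I would first dispose of the two assertions assuming the bound, and then concentrate on the bound itself.

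Suppose $\mathcal{I}_T$ is $m$-regular with $m = d_h$. From the structure sequence
\[
0 \longrightarrow \mathcal{I}_T \longrightarrow \sO_{\PP^t} \longrightarrow \sO_T \longrightarrow 0
\]
and the vanishing of the middle cohomology of $\sO_{\PP^t}(d)$ one reads off that $H^j(\sO_T(d)) = 0$ for all $j \ge 1$ and all $d \ge m$, whence $\dim H^0(\sO_T(d)) = \chi(\sO_T(d)) = h(d)$; combined with $H^1(\mathcal{I}_T(d)) = 0$ for $d \ge m-1$, the same sequence gives $\dim \bigl(k[x_0,\dots,x_t]/I(T)\bigr)_d = h(d)$ for $d \ge m$, which is the agreement of the Hilbert function with the Hilbert polynomial. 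For the second assertion I would invoke the standard consequence of regularity that an $m$-regular sheaf $\mathcal{F}$ has surjective multiplication maps $H^0(\mathcal{F}(m)) \otimes H^0(\sO_{\PP^t}(k)) \to H^0(\mathcal{F}(m+k))$ for all $k \ge 0$; applied to $\mathcal{F} = \mathcal{I}_T$, whose degree-$k$ sections are exactly the graded piece $I(T)_k$ of the saturated ideal, this says precisely that $I(T)$ is generated in degrees $\le m = d_h$. These facts, together with the propagation property ``$m$-regular $\Rightarrow$ $(m+1)$-regular'', are the elementary parts of the regularity formalism I would establish first.

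The heart of the argument is the uniform bound, which I would prove by induction on $r = \dim T$ via general hyperplane sections. For a hyperplane $H$ avoiding the associated points of $\sO_T$ one has the exact sequence $0 \to \mathcal{I}_T(-1) \to \mathcal{I}_T \to \mathcal{I}_{T\cap H, H} \to 0$, and the section $T' = T \cap H \subset H \cong \PP^{t-1}$ has Hilbert polynomial $h'(n) = h(n) - h(n-1)$, of degree $r-1$. By the inductive hypothesis $\mathcal{I}_{T'}$ is $m'$-regular for an $m'$ depending only on $h'$, hence only on $h$. The decisive step is then the hyperplane restriction lemma: if $\mathcal{I}_{T'}$ is $m'$-regular and in addition $H^1(\mathcal{I}_T(m'-1)) = 0$, then $\mathcal{I}_T$ is already $m'$-regular. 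Chasing the long exact cohomology sequence of the restriction sequence (twisted by $d$) shows that the higher vanishings $H^i(\mathcal{I}_T(m'-i)) = 0$ for $i \ge 2$ follow directly from the regularity of $\mathcal{I}_{T'}$ by upward induction on $d$, so only the first cohomology requires the extra hypothesis. The base case $r = 0$, where $T$ is a length-$h$ zero-dimensional scheme, is handled directly: its ideal sheaf is $h$-regular.

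The main obstacle is precisely forcing the vanishing $H^1(\mathcal{I}_T(m'-1)) = 0$, or more realistically bounding the least $k$ with $H^1(\mathcal{I}_T(k)) = 0$ \emph{numerically} in terms of $h$ alone. The restriction sequence shows that the maps $H^1(\mathcal{I}_T(k)) \to H^1(\mathcal{I}_T(k+1))$ are surjective once $k \ge m'-1$, so in that range the finite-dimensional spaces $H^1(\mathcal{I}_T(k))$ form a non-increasing system that must reach $0$ by Serre vanishing. Two things must be pinned down: a bound on the initial dimension $\dim H^1(\mathcal{I}_T(m'-1))$, which I would extract by comparing the Hilbert function with the Hilbert polynomial (once the higher cohomology of $\sO_T$ has died, $\dim H^1(\mathcal{I}_T(k))$ equals the non-negative defect $h(k) - \dim(k[x]/I(T))_k$, estimable in terms of $m'$ and the coefficients of $h$); and a \emph{persistence} statement guaranteeing that the dimension strictly drops at each step while positive, so that the system vanishes within $\dim H^1(\mathcal{I}_T(m'-1))$ steps. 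This persistence, which one proves using the module structure of $\bigoplus_k H^1(\mathcal{I}_T(k))$ under multiplication by a general linear form, is the delicate technical point; everything else is a formal consequence of the regularity machinery. Assembling these estimates produces an explicit (if doubly exponential) closed-form bound $d_h$ depending only on $h$ and closes the induction.
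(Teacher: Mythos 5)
The paper does not actually prove this statement---it is quoted as a known result with a citation to Mumford's \emph{Lectures on curves on an algebraic surface} \cite{Mum66}, Chapter 14---and your proposal correctly reconstructs precisely Mumford's argument from that reference: reduce both assertions (agreement of Hilbert function with Hilbert polynomial, and bounded degrees of generators) to a uniform Castelnuovo--Mumford regularity bound $d_h$ depending only on $h$, and prove that bound by induction on dimension via generic hyperplane sections, with the surjectivity chain on $H^1(\mathcal{I}_T(k))$ and the strict-drop (persistence) step as the crux. Your formal deductions from $m$-regularity are correct, your extension of the statement to arbitrary subschemes is exactly the right move to make the hyperplane induction close, and the one point you leave as a sketch (persistence of bijectivity of $H^1(\mathcal{I}_T(k-1)) \to H^1(\mathcal{I}_T(k))$, via the multiplication structure coming from the regularity of $\mathcal{I}_{T'}$) is the same delicate step Mumford handles, so your route coincides with the cited proof.
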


\begin{proposition}\xlabel{pBirFamilies}
The subset \[ \mathrm{Bir}_{d,d',i}(S, \PP^t):= \mathrm{Bir}_{d,d'}(S, \PP^t) \cap  \mathrm{Rat}_{d,i}(S, \PP^t)\] is closed in $ \mathrm{Rat}_{d,i}(S, \PP^t)$, provided $d' =d'(i)\ge d_h$ for the common Hilbert polynomial $h$ of the fibers. Hence there are countably many flat projective families 
\[
\mathcal{T}_{d,d',i}(S, \PP^t)\subset \PP^t \times  \mathrm{Bir}_{d,d',i}(S, \PP^t) \to \mathrm{Bir}_{d,d',i}(S, \PP^t)
\]
with
\[
\mathrm{Bir}_{d}(S, \PP^t) = \bigcup_{i\in I} \mathrm{Bir}_{d,d'(i),i}(S, \PP^t)
\]
such that the closure of the fiber $(\mathcal{T}_{d,d',i}(S, \PP^t))_{\underline{p}}$ over $\underline{p} \in \mathrm{Bir}_{d,d'}(S, \PP^t)$ is equal to the image $T$ of $\varphi_{\underline{p}}$.
\end{proposition}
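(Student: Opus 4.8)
The plan is to reduce membership in $\mathrm{Bir}_{d,d'}(S,\PP^t)$ to a single kernel-dimension condition on $\underline{p}$ and then to show that this condition is closed, the role of Mumford's Theorem \ref{tMumford} being to guarantee that a certain correction term is \emph{constant} rather than merely semicontinuous. First I would fix $\underline{p}\in\mathrm{Rat}_{d,i}(S,\PP^t)$, write $T=T_{\underline{p}}$ for the image with its prime homogeneous ideal $I(T)$, and observe that a tuple $\underline{q}$ of degree-$d'$ forms is an inverse of $\varphi_{\underline{p}}$ exactly when $\varphi_{\underline{q}}\circ\varphi_{\underline{p}}=\mathrm{id}_S$, i.e. when
\[ q_j(\underline{p})\,x_k - q_k(\underline{p})\,x_j \in I(S)\quad\text{for all }0\le j,k\le s, \]
together with the requirement that not all $q_j$ lie in $I(T)$. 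Using that $\varphi_{\underline{p}}$ is dominant one has $q_j(\underline{p})\in I(S)\iff q_j\in I(T)$, so the second requirement says precisely that the composite is an honest rational map, and the displayed relations then force it to equal the identity; conversely a dominant rational map admitting a left inverse is automatically birational, so the existence of such a $\underline{q}$ is equivalent to $\underline{p}\in\mathrm{Bir}_{d,d'}(S,\PP^t)$.

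The displayed relations are linear and homogeneous in the coefficients of $\underline{q}$, so I would package them as the kernel $\mathrm{Sol}(\underline{p})$ of a linear map $M_{\underline{p}}$ from the \emph{fixed} space $(\mathrm{Sym}^{d'}(k^{t+1})^{\vee})^{s+1}$ into the \emph{fixed} space $\big((k[\underline{x}]/I(S))_{dd'+1}\big)^{\oplus\binom{s+1}{2}}$, whose matrix entries are polynomials in the coordinates of $\underline{p}$. Setting $N(\underline{p}):=I(T)_{d'}^{\oplus(s+1)}$ (the tuples all of whose entries vanish on $T$), one checks directly that $N(\underline{p})\subseteq\mathrm{Sol}(\underline{p})$, and the first paragraph then yields
\[ \underline{p}\in\mathrm{Bir}_{d,d'}(S,\PP^t)\iff \mathrm{Sol}(\underline{p})\supsetneq N(\underline{p})\iff \dim\mathrm{Sol}(\underline{p})>\dim N(\underline{p}). \]

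Now $\dim\mathrm{Sol}(\underline{p})=\dim\ker M_{\underline{p}}$ is \emph{upper} semicontinuous on $\mathrm{Rat}_{d,i}(S,\PP^t)$, while $\dim N(\underline{p})=(s+1)\dim I(T_{\underline{p}})_{d'}$. This is the one place where Theorem \ref{tMumford} enters: all fibers $T_{\underline{p}}$ share the common Hilbert polynomial $h$ of the flat family of Proposition \ref{pFamilies}, and because $d'\ge d_h$ their Hilbert functions agree with $h$ at $d'$, whence $\dim N(\underline{p})=(s+1)\big(\binom{t+d'}{t}-h(d')\big)=:c$ is \emph{constant}. Therefore
\[ \mathrm{Bir}_{d,d',i}(S,\PP^t)=\{\,\underline{p}\in\mathrm{Rat}_{d,i}(S,\PP^t):\dim\mathrm{Sol}(\underline{p})\ge c+1\,\} \]
is closed, being a rank-drop locus of the polynomially-varying map $M_{\underline{p}}$.

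For the family statement I would first record the monotonicity $\mathrm{Bir}_{d,d'}\subseteq\mathrm{Bir}_{d,d'+1}$: multiplying an inverse $\underline{q}$ by a linear form $\ell\notin I(T)$ (which exists since $T\ne\emptyset$ and $I(T)$ is prime) gives a degree-$(d'+1)$ tuple defining the same map. Hence on each $\mathrm{Rat}_{d,i}$ the sets $\mathrm{Bir}_{d,d'}$ form an increasing chain, so $\bigcup_{d'\ge d_h}\mathrm{Bir}_{d,d'}=\bigcup_{d'}\mathrm{Bir}_{d,d'}=\mathrm{Bir}_d$. Re-indexing the countable set of pairs $(i,d')$ with $d'\ge d_h$ and restricting the flat projective families $\mathcal{T}_{d,i}(S,\PP^t)$ to the closed pieces $\mathrm{Bir}_{d,d',i}(S,\PP^t)$ then produces the families $\mathcal{T}_{d,d',i}(S,\PP^t)$ and the equality $\mathrm{Bir}_d(S,\PP^t)=\bigcup_{i}\mathrm{Bir}_{d,d'(i),i}(S,\PP^t)$. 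I expect the main obstacle to be exactly the constancy of $\dim N(\underline{p})$: a priori it is only semicontinuous, and the difference of two semicontinuous functions need not cut out a closed set, so the argument would break down if the Hilbert function had not yet stabilized at $d'$. It is precisely the uniform Mumford bound $d'\ge d_h$ that removes this difficulty, which is also why the authors' first argument was conditional on an Eisenbud--Goto-type regularity bound.
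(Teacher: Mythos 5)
Your proposal is correct, and it establishes closedness by a genuinely different mechanism than the paper, even though the underlying incidence conditions coincide. The paper encodes the compatibility of $\underline{q}$ with $\underline{p}$ by exactly your relations (the vanishing modulo $I(S)$ of the $2\times 2$ minors of the matrix with rows $(q_j(p_0,\dots,p_t))_j$ and $(x_j)_j$), uses the same primality/nondegeneracy argument to rule out the degenerate case where all $q_j(p)\in I(S)$, and invokes Mumford's bound at the same single point, namely the constancy of $\dim I(T)_{d'}$ on the stratum. The difference lies in how closedness is then extracted: where you linearize --- the trivial solutions form the subspace $N(\underline{p})=I(T)_{d'}^{\oplus(s+1)}$ of constant dimension $c$, so that birationality becomes the determinantal condition $\dim\ker M_{\underline{p}}\ge c+1$, closed by semicontinuity --- the paper instead projectivizes the quotient by your $N(\underline{p})$: it forms the projective bundle $\mathcal{P}_{d,d',i}$ over $\mathrm{Rat}_{d,i}(S,\PP^t)$ with fiber $\PP\bigl((\mathrm{Sym}^{d'}(k^{t+1})^{\vee}/I(T)_{d'})^{s+1}\bigr)$ (this is where constancy of $\dim I(T)_{d'}$ enters for them), cuts out the closed incidence locus $\mathcal{S}_{d,d',i}$ by the same minors, and concludes that $\mathrm{Bir}_{d,d',i}(S,\PP^t)$ is the image of $\mathcal{S}_{d,d',i}$ under the proper bundle projection, hence closed. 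These are dual views of one incidence variety --- nonemptiness of the paper's fiber over $\underline{p}$ is precisely your condition $\mathrm{Sol}(\underline{p})\supsetneq N(\underline{p})$ --- but your route avoids properness and closed-image arguments entirely and produces explicit equations (fixed-size minors of a matrix whose entries are forms of degree $d'$ in the coordinates of $\underline{p}$) for the locus, while the paper's bundle formulation has the mild advantage of packaging the inverse maps themselves into a family over the stratum. You also make explicit the monotonicity $\mathrm{Bir}_{d,d'}\subseteq\mathrm{Bir}_{d,d'+1}$ needed for the union statement (multiplying $\underline{q}$ by a form not in $I(T)$), which the paper leaves implicit. One small correction to your closing remark: the Eisenbud--Goto dependence the authors mention occurred in the construction of the strata in Proposition \ref{pFamilies} and was removed by the referee's Lemma \ref{lReferee}; the stabilization of the Hilbert function at $d'\ge d_h$ used here is unconditional, by Mumford's Theorem \ref{tMumford}.
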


\begin{proof}
Consider the set
\begin{gather*}
\mathcal{P}_{d, d',i}:= \\
\left\{ (\underline{p}, [\underline{q}] ) \mid \underline{p}\in \mathrm{Rat}_{d,i} (S, \PP^t) , [\underline{q}] \in \PP ((\mathrm{Sym}^{d'} (k^{t+1})^{\vee} / I (T)_{d'})^{s+1} ), \; T =  \overline{\varphi_{\underline{p}} (S)} \right\}
\end{gather*}
which is a projective bundle over $\mathrm{Rat}_{d,i} (S, \PP^t)$ with fiber over a point $\underline{p}$ the projective space of $(s+1)$-tuples of homogeneous polynomials of degree $d'$ modulo those vanishing on the image of the rational map induced by $\underline{p}$. Here we use the constancy of $\dim I(T)_{d'}$, i.e. we use the condition $d' \ge d_h$. We define a closed subvariety $\mathcal{S}_{d, d',i}$ of $\mathcal{P}_{d, d',i}$ by the requirement that for the pair  $(\underline{p}, \underline{q})$ the two by two minors of the matrix 
\begin{gather*}
\left(
\begin{array}{ccc}
q_0 (p_0 (\underline{x}), \dots , p_t (\underline{x})) & \dots & q_s (p_0 (\underline{x}), \dots , p_t(\underline{x})) \\
x_0 & \dots & x_s
\end{array}
\right)
\end{gather*}
are all zero modulo the ideal $I(S)$. Clearly this condition is independent of the lift $\underline{q}$ of $[\underline{q}]$ to $\mathcal{P}_{d'} (\underline{y})$. If all the polynomials in the first row of the preceding matrix are nonzero modulo $I(S)$, then $\underline{p}$ defines a birational map from $S$ unto its image with inverse induced by $\underline{q}$. 

In the opposite case, $\varphi_{\underline{p}}$ would contract $S$ into a proper subvariety of $T$, the one defined by $\{ q_j = 0 \}$. Here we are using the assumption that $S$ is contained in no coordinate hyperplane $x_j =0$ and that the ideal $I(S)$ is prime. Now contraction is impossible because the dimension of the image of $S$ is equal to the dimension of $S$. 

The projection \[ \mathcal{S}_{d, d',i} \to \mathrm{Rat}_{d,i} (S, \PP^t)\]  is proper because the projection \[ \mathcal{P}_{d, d',i} \to \mathrm{Rat}_{d,i} (S, \PP^t) \] is proper. By construction, the set $\mathrm{Bir}_{d,d',i} (S, \PP^t )$ is equal to the image, a closed subvariety of $\mathrm{Rat}_{d,i} (S, \PP^t)$.
\end{proof}

\begin{proposition}\xlabel{pCover}
Let $\mathrm{Hilb}_{h, \PP^t}^0$ be the open subset of the Hilbert scheme of subschemes $Z$ of $\PP^t$ with Hilbert polynomial $h$ which are reduced and irreducible (hence varieties). Let 
\[
(\mathcal{B}ir-X_0)_{h, \PP^t} \subset \mathrm{Hilb}_{h, \PP^t}^0
\]
be the subset consisting of those $Z$ which are birational to some fixed model variety $X_0 \subset \PP^s$. Then there are countably many locally closed subvarieties $\mathcal{H}_i \subset \mathrm{Hilb}_{h, \PP^t}^0$ such that
\[
(\mathcal{B}ir-X_0)_{h, \PP^t} = \bigcup_i \mathcal{H}_i .
\]
\end{proposition}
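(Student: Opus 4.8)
The plan is to realise $(\mathcal{B}ir-X_0)_{h,\PP^t}$ as a countable union of images, inside the Hilbert scheme, of the parameter spaces of birational maps constructed above, taking the fixed model $X_0 \subset \PP^s$ itself as the source $S$. First I apply Propositions \ref{pFamilies} and \ref{pBirFamilies} with $S = X_0$. For every degree $d$, every stratum $i$ and the associated $d' = d'(i) \ge d_{h_i}$ (with $d_{h_i}$ as in Theorem \ref{tMumford}), this yields the locally closed subvariety $\mathrm{Bir}_{d,d',i}(X_0, \PP^t) \subset \mathcal{P}_d(\underline{x})$ together with a flat projective family $\mathcal{T}_{d,d',i}(X_0, \PP^t)$ over $\mathrm{Bir}_{d,d',i}(X_0, \PP^t)$ whose fibre over $\underline{p}$ is the image $T = \overline{\varphi_{\underline{p}}(X_0)}$. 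The fibres in a single stratum share one Hilbert polynomial $h_i$; I keep only the (countably many) triples $(d,d',i)$ with $h_i = h$.

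Next, on each retained base I restrict to the locus where the fibre of $\mathcal{T}_{d,d',i}$ is geometrically integral. This is an open condition for a flat proper family, and it is met at every relevant $\underline{p}$, since there the scheme-theoretic fibre is the reduced irreducible variety $T$. The resulting flat family of subvarieties of $\PP^t$ of Hilbert polynomial $h$ is then classified, by the universal property of $\mathrm{Hilb}_{h,\PP^t}$, by a morphism
\[
\mu_{d,d',i} \colon \mathrm{Bir}_{d,d',i}(X_0, \PP^t) \longrightarrow \mathrm{Hilb}^0_{h, \PP^t}, \qquad \underline{p} \longmapsto [T].
\]
By Chevalley's theorem the image of each $\mu_{d,d',i}$ is constructible, hence a finite union of locally closed subvarieties; taking the union over the countably many triples $(d,d',i)$ produces a countable family $\{\mathcal{H}_i\}$ of locally closed subvarieties of $\mathrm{Hilb}^0_{h,\PP^t}$.

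It remains to identify $\bigcup_{d,d',i} \mathrm{Im}(\mu_{d,d',i})$ with $(\mathcal{B}ir-X_0)_{h,\PP^t}$. One inclusion is immediate, since each $T$ occurring in an image is birational to $X_0$ via $\varphi_{\underline{p}}$. The reverse inclusion is the crux: given a variety $Z \subset \PP^t$ with $[Z] \in \mathrm{Hilb}^0_{h,\PP^t}$ and birational to $X_0$, I choose a birational map $X_0 \dashrightarrow Z$, write its components as ratios of homogeneous forms of a common degree $d$ on $\PP^s$ restricted to $X_0$, and likewise represent the inverse by forms of some degree $d'$. I may assume $d' \ge d_h$ by multiplying through by a power of a coordinate, which is legitimate because $X_0$ lies in no coordinate hyperplane. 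This exhibits some $\underline{p} \in \mathrm{Bir}_{d,d',i}(X_0, \PP^t)$ with $\mu_{d,d',i}(\underline{p}) = [Z]$.

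I expect this reverse inclusion to be the main obstacle: it requires that an arbitrary birational map and its inverse admit homogeneous polynomial representatives of controlled degree, and it needs the bookkeeping that guarantees the scheme-theoretic fibres of the families are the reduced images $T$, so that the classifying maps really take values in $\mathrm{Hilb}^0_{h,\PP^t}$. Once these points are settled, Chevalley's theorem and the countability of the index set assemble the statement formally.
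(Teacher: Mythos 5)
Your proposal is correct and takes exactly the paper's route: the paper's entire proof of Proposition \ref{pCover} reads ``This follows immediately from Proposition \ref{pBirFamilies}'', and what you spell out --- the classifying morphisms from the strata $\mathrm{Bir}_{d,d'(i),i}(X_0,\PP^t)$ to $\mathrm{Hilb}^0_{h,\PP^t}$, Chevalley's theorem, and the two inclusions (with the reverse inclusion resting on representing a birational map and its inverse by forms of some degrees $d, d'$, raising $d'$ above $d_h$ as needed) --- is precisely the intended expansion of that one-line deduction. One cosmetic slip: to raise $d'$ you multiply the $q_j$, which are forms on $\PP^t$ restricted to $Z$, so the relevant fact is that some coordinate $y_k$ does not vanish identically on $Z$ (automatic, since the coordinate hyperplanes of $\PP^t$ have empty intersection), not that $X_0$ avoids the coordinate hyperplanes of $\PP^s$.
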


\begin{proof}
This follows immediately from Proposition \ref{pBirFamilies}.
\end{proof}

\begin{proof}(of Theorem \ref{tMain})
We get a morphism
\[
\alpha : B \to \mathrm{Hilb}_{h, \PP^t}^0
\]
and its image $\alpha (B)$ contains an open subset $U$ of its closure $\overline{\alpha (B)}$ in $\mathrm{Hilb}_{h, \PP^t}^0$. Then $U$ is covered by the countable union of the locally closed subvarieties $\mathcal{H}_i$. The intersection $U \cap \mathcal{H}_i$ is either contained in a proper subvariety or contains a Zariski dense open subset of $U$. Hence there is some $\mathcal{H}_{i_0}$ containing an open dense subset $U'$ of $U$. This is the point where we use that $k$ is uncountable!  We also have a natural morphism
\[
\beta : \mathrm{Bir}_{d, d',i_0} (X_0, \PP^t) \to \mathrm{Hilb}_{h, \PP^t}^0
\]
such that the closure of the image of $\beta$ contains the closure of $\mathcal{H}_{i_0}$ as an irreducible component. Let 
\[
B^{\sharp} =\alpha^{-1} (U') \subset B, \; \beta^{-1} (U') \subset \mathrm{Bir}_{d, d',i_0} (X_0, \PP^t)
\]
be the indicated open preimages. Shrinking $U'$ a little more and passing to a subvariety, we can assume that there is a $\tilde{B} \subset \beta^{-1} (U')$ such that $\beta$ maps $\tilde{B}$ unto $U'$ and is finite. We get a commutative diagram
\[
\xymatrix{
B' :=B^{\sharp} \times_{U'} \tilde{B} \ar[r]^{\alpha'}\ar[d]^{\beta'} & \tilde{B} \ar[d]^{\beta}\\
B^{\sharp} \ar[r]^{\alpha} & U' \subset  \mathrm{Hilb}_{h, \PP^t}^0
}
\]
Here $\beta'$ is finite, and the pull-back of the universal family on $\mathrm{Hilb}^0_{h, \PP^t}$ under $\beta$ to $\tilde{B}$ is birationally trivial: this is so because by construction the family over $\mathrm{Bir}_{d, d',i_0} (X_0, \PP^t)$ is birationally trivial. Pulling this family back via $\alpha'$ to $B'$ gives the same thing as pulling back the restriction of the original family $f : X_{B^{\sharp}} \to B^{\sharp}$ to $B'$: hence $\beta' : B' \to B$ has the properties claimed in Theorem \ref{tMain}.
\end{proof}

\begin{remark}\xlabel{rGeneralization}
Working over $\CC$, the preceding proof goes through without change if the family $f: X\to B$ is only over an analytic base $B$, provided one imposes the condition that the image $\alpha : B \to \mathrm{Hilb}_{h, \PP^t}^0$ is analytic (it is always constructible in the algebraic case, but may be wilder in the analytic case). For example, it will again be an analytic set if the fibers $\alpha^{-1}(\alpha (b))$ over points in the image all have the same dimension, and in several other cases, see the introduction in \cite{Huck71}.
\end{remark}

\section{Second proof}\xlabel{sSecondProof}

We continue assuming that $k$ has infinite transcendence degree over the prime field $\mathbb{K}$ (and is algebraically closed). The diagram $f : X \to B$ (i.e. all of $X$, $B$, $f$) are defined over a finitely generated extension $k_0$ of $\mathbb{K}$ which is contained in $k$. Then $B$ has a $k_0$-generic point in the sense of van der Waerden, i.e. a closed point $\xi \in B$ with coordinates in $k$ such that any polynomial with coefficients in $k_0$ which vanishes at $\xi$ vanishes identically on $B$, and such that the residue field $\kappa_{\xi}$ is the field $k_0 (B)$ of rational functions with coefficients in $k_0$ on $B$. 

Now $X_{\xi}$ is, by hypothesis, birational to $X_0$ over $k$, which means that, possibly after enlarging $k_0$ by adjoining finitely many elements from $k$ which are algebraically independent over $k_0 (B)$, $X_{\xi}$ is birational to $X_0$ over $\overline{k_0 (B)}$. But then the geometric generic fiber of the family $f : X \to B$ is also birational, over $\overline{k(B)}$, to $X_0$.

\section{Open questions}\xlabel{sQuestions}
\begin{question}\xlabel{qDiscussion}
We finally want to expand on Remark \ref{rThmMain}. Note that Theorem \ref{tMain} says that a family $X \to B$ all of whose fibers are birational is birationally isotrivial, but the trivialization $\Phi$ need not be defined in the generic point of a fiber $X_b$ for a particular $b \in B$. In that respect, it is not the exact birational analogue of the local triviality theorem of Fischer-Grauert  \cite{FG65} (in the holomorphic setting) asserting that a family of compact complex manifolds is locally analytically trivial if and only if all the fibers are biholomorphic: here the family is locally trivial around \emph{every point} of the base. In the algebraic case, the statement is that a family of projective schemes is locally isotrivial if all fibers are isomorphic (see \cite{Sernesi}, Prop. 2.6.10).

On the other hand, if one drops the compactness hypothesis, the Fischer-Grauert theorem becomes false: for example, if one takes 
\[
\mathcal{C} : = \PP^1 \times \PP^1 - \left( \{ (x,x) | x\in \PP^1 \} \cup \{ (x,y) \in \PP^1\times \PP^1 \mid y = a \} \cup \{ (a, b\} \right)
\]
for $a\neq b$ in $\PP^1$ fixed, with projection $\mathrm{pr}_1 : \mathcal{C} \to \PP^1$, then every fiber is $\CC^{\ast} \simeq \PP^1 -$(two points), but $\mathcal{C}$ is not even topologically locally trivial around $a\in \PP^1$: a small circle in $\mathcal{C}_a$ around $(a,b)$ is homologically trivial in $\mathrm{pr}_1^{-1}(U)$ for any neighborhood $U\ni a$ in $\PP^1$.

In this last example, the family is locally holomorphically trivial around a general point of the base, but not even topologically locally trivial in points of a proper analytic subset. It is thus an interesting open question how the situation is in the birational set-up: given a family $f : X \to B$ with all fibers birational to each other, could it happen that for some bad points $b\in B$ there is no birational trivialization $\Phi$ defined in the generic point of $X_b$ (analogue of the last example with open fibers), or does such a $\Phi$ always exist (birational analogue of Fischer-Grauert)? 
\end{question}

\begin{question}\xlabel{qCountable}
It seems an interesting question to investigate if Theorem \ref{tMain} remains true if we only assume $k$ algebraically closed. We may also ask: given a family $f : X \to B$ over $\CC$ which is defined over $\bar{\QQ}$ and such that all fibers over algebraic points in $B$, i.e. $\bar{\QQ}$-valued points, are birational to a fixed model, is it true that the fibers in a Zariski dense open subset of $B(\CC )$ are birational to each other?
\end{question}

\begin{remark}\xlabel{rStronglyRational}
Concerning Question \ref{qCountable}, we would like to remark that there are families of unirational varieties where the birational type is expected to change on a countable union of subvarieties of the base: e.g. this is expected to happen for the family of cubic fourfolds where the very general one should be irrational whereas in a countable union of subvarieties of the parameter space one can get rational ones. But possibly these are not dense in a way that would yield a negative answer to Question \ref{qCountable}.

Let us also remark that if one considers \emph{strongly rational varieties}, i.e. smooth varieties $X$ which contain an open subset $U$ isomorphic to an open subset $V\subset \PP^n$ such that $\PP^n -V$ has codimension at least $2$ in $\PP^n$, then their deformation properties are much better: smooth small deformations of strongly rational varieties are again strongly rational, \cite{IN03} Thm. 4.5.
\end{remark}

\end{document}